\numberwithin{equation}{section}
\newtheorem{thm}{Theorem}[section]
  \theoremstyle{plain}
  \newtheorem{lem}[thm]{Lemma}
  \theoremstyle{plain}
  \newtheorem{prop}[thm]{Proposition}
  \theoremstyle{plain}
  \theoremstyle{plain}
  \newtheorem{definition}[thm]{Definition}
    \theoremstyle{definition}
\newcommand{\R}{{\mathbb R}}
\title[Uniqueness of least-energy solutions in the ball]{Uniqueness of least-energy solutions to the fractional Lane-Emden equation in the ball}
\author[A. DelaTorre]{Azahara DelaTorre}
\author[E. Parini]{Enea Parini}
\address[E. Parini]{Aix Marseille Univ, CNRS, I2M, 3 place Victor Hugo, 13331 Marseille CEDEX 03, France}
\email{enea.parini@univ-amu.fr}
\address[A. DelaTorre]{Dipartimento di Matematica Guido Castelnuovo. 
	Facoltà Scienze matematiche, fisiche e naturali.
	Sapienza Università di Roma. 
	Piazzale Aldo Moro, 5, 00185 Roma RM.} \email{azahara.delatorrepedraza@uniroma1.it}
\subjclass[2010]{}
\date{\today}
\keywords{}
\thanks{}
\begin{document}

\begin{abstract}
We prove uniqueness of least-energy solutions to the fractional Lane-Emden equation, under homogeneous Dirichlet exterior conditions, when the underlying domain is a ball $B \subset \mathbb{R}^N$. The equation is characterized by a superlinear, subcritical power-like nonlinearity. The proof makes use of Morse theory and is inspired by some results obtained by C. S. Lin in the '90s. A new Hopf's Lemma-type result shown in this paper is an essential element in the proof of nondegeneracy of least-energy solutions.
\end{abstract}

\maketitle

\section{Introduction}

Let $\Omega \subset \R^N$ be a bounded, open set. The \emph{fractional Lane-Emden equation} reads as follows:

\begin{equation}\label{eq:main}
	\left\{\begin{array}{r c l l}
		(-\Delta)^s u & = & u^p & \text{in }\Omega, \\ u & = & 0 & \text{in }\R^N \setminus \Omega.
	\end{array}\right.
\end{equation}
Here $s \in (0,1)$, $p \in \left(1,\frac{N+2s}{N-2s}\right)$, and $(-\Delta)^s$ is the fractional Laplacian, defined, up to a positive multiplicative constant, as
\[ (-\Delta)^s u(x) := 
\int_{\R^N} \frac{u(x)-u(y)}{|x-y|^{N+2s}}\,dy, 
.\]
In the limit $s \to 1^-$, one recovers, after a suitable scaling, the usual Laplacian operator, and \eqref{eq:main} becomes
\begin{equation}\label{eq:mainlocal}
	\left\{\begin{array}{r c l l}
		-\Delta u &= & u^p & \text{in }\Omega, \\ u &= & 0 & \text{on }\partial \Omega.
	\end{array}\right.
\end{equation}
While existence of positive solutions to \eqref{eq:mainlocal} easily follows from standard variational methods, the question of their uniqueness is much more involved. In fact, uniqueness of positive solutions does not hold, in general, for annular domains \cite{dancer}. When $\Omega$ is a ball, however, a classical result by Gidas, Ni and Nirenberg \cite{GNN} states that positive solutions of \eqref{eq:mainlocal} are unique. The proof is based on the moving plane method, which guarantees that all positive solutions are radially symmetric, and which therefore allows to apply ODE techniques.

It is natural to wonder if these results carry over to the fractional case. When $\Omega$ is a ball, an extension of the moving plane method still allows to prove that all positive solutions to \eqref{eq:main} are radially symmetric (see \cite{chenlili}). However, it does not seem possible to use ODE arguments in nonlocal problems such as \eqref{eq:main}. A partial result has been obtained by Dieb, Ianni and Salda\~{n}a in \cite{diebiannisaldana} where, by perturbative methods, they obtained uniqueness of positive solutions for either $p$ or $s$ sufficiently close to $1$. Another partial result in the same direction have been obtained by the same authors in \cite{diebiannisaldana2}. In the very recent preprint \cite{fallwethonedim}, which was uploaded on ArXiv while our paper was being finalized, Fall and Weth proved uniqueness of positive solutions to \eqref{eq:main} in the one-dimensional case. We also mention the paper \cite{chandelmarhuangmaininivolzone}, where the authors obtained uniqueness of positive solutions to an equation in $\R^N$, driven by the fractional Laplacian, with convex nonlinearity. Unfortunately, we were not able to extend their interesting results to equation \eqref{eq:main}, since the fact of setting the equation in a ball instead of $\R^N$ seems to create a major technical difficulty.

For these reasons, in this paper we focus on \emph{least-energy} solutions, also called \emph{ground states}. By definition, these are the solutions of the minimization problem
\begin{equation*}
\inf\left\{\int_{\R^N} \int_{\R^N} \frac{|\phi(x)-\phi(y)|^2}{|x-y|^{N+2s}}\,dx\,dy \,\bigg|\,\phi\in H_0^s(\Omega) \text{ with }\int_\Omega |\phi|^{p+1}=1\right\},
\end{equation*}
whose existence follows from classical variational methods. It is straightforward to verify that least-energy solutions do not change sign in $\Omega$ and hence, by the strong maximum principle, are strictly positive (or negative) in $\Omega$. In particular, if $\Omega$ is a ball, least-energy solutions are radially symmetric. 

In the local case, Lin proved in \cite{lin}, by making use of Morse theory, that least-energy solutions to \eqref{eq:mainlocal} are unique whenever $\Omega$ is a convex, planar domain. We are able to extend his method of proof to the nonlocal case, and obtain the following result when $\Omega$ is a ball of any dimension $N\geq 2$:

\begin{thm} \label{thm:maintheorem}
Let $N \geq 2$, let $B \subset \R^N$ be a ball, $s \in (0,1)$, and $p \in \left(1, \frac{N+2s}{N-2s}\right)$. Then, the equation
\begin{equation}\label{eq:mainball}
	\left\{\begin{array}{r c l l}
		(-\Delta)^s u &= & u^p & \text{in }B, \\ u &= & 0 & \text{in }\R^N \setminus B,
	\end{array}\right.
\end{equation}
admits a unique (up to multiplication by $-1$) least-energy solution $u \in H^s_0(B)$, which is strictly positive (resp. strictly negative) in $B$, and radially symmetric. Moreover, $u$ is non-degenerate, and its Morse index is equal to one.
\end{thm}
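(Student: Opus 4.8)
The plan is to prove the three assertions (Morse index one, non-degeneracy, and uniqueness up to sign) in that order, treating positivity and radial symmetry as already acquired. Indeed, as recalled above a least energy solution does not change sign, so after replacing $u$ by $-u$ if necessary we may assume $u>0$ in $B$; the strong maximum principle together with the moving plane method of \cite{chenlili} then yields that $u$ is radially symmetric and, in fact, radially strictly decreasing. Throughout I write $L_u := (-\Delta)^s - \lambda - p\,u^{p-1}$ for the operator obtained by linearizing \eqref{eq:mainball} at $u$, acting on $H^s_0(B)$, and $Q_u(\varphi) := \langle L_u\varphi,\varphi\rangle$ for the associated quadratic form, which is the second variation of the energy at $u$.

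For the Morse index, I would first test $Q_u$ against $u$ itself: using the equation $(-\Delta)^s u = \lambda u + u^p$ one gets $Q_u(u) = (1-p)\int_B u^{p+1} < 0$ because $p>1$, so the index is at least one. For the upper bound I would use that $u$ arises, after scaling, as a \emph{minimizer} of the quadratic functional $\varphi \mapsto \int\!\int |\varphi(x)-\varphi(y)|^2/|x-y|^{N+2s} - \lambda\int_B \varphi^2$ on the constraint manifold $M = \{\|\varphi\|_{L^{p+1}(B)} = 1\}$; note this functional is positive definite precisely because $\lambda < \lambda_1(B)$. Since $M$ has codimension one and its tangent space $T_uM = \{v : \int_B u^p v = 0\}$ does not contain $u$ (as $\int_B u^{p+1}>0$), a standard argument for constrained minimizers then shows that $Q_u$ admits at most one negative direction, whence, with $Q_u(u)<0$, the Morse index equals one.

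The technical heart is non-degeneracy, namely that $L_u\varphi = 0$ with $\varphi\in H^s_0(B)$ implies $\varphi = 0$. Because $u$ is radial, $L_u$ commutes with rotations, so I would expand $\varphi$ in spherical harmonics and analyse each angular block. For the blocks of degree $k\geq 1$ the idea is to exploit radial monotonicity: the derivatives $\partial_i u$ formally solve the linearized equation and live in the $k=1$ block, but they fail the exterior Dirichlet condition and hence do not belong to $H^s_0(B)$; since $\partial_r u$ has a strict sign, a fractional Hopf lemma together with a supersolution/maximum-principle argument should show that the principal eigenvalue of the $k=1$ block is \emph{strictly} positive, and monotonicity of the principal eigenvalue in $k$ then excludes every mode with $k\geq 1$. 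The main obstacle is the remaining radial ($k=0$) block: one must rule out a sign-changing radial element of the kernel, and this is exactly where the nonlocality bites, since the Sturm/Wronskian comparison that settles the local case — comparing such an element with the boundary-violating solution $v = \tfrac{2s}{p-1}u + x\cdot\nabla u$ produced (when $\lambda=0$) by the $L^{p+1}$-scaling — has no direct analogue for $(-\Delta)^s$. My plan here is to recover a comparison either by passing to the Caffarelli–Silvestre extension, where the problem becomes locally elliptic in one extra variable, or through a Pohozaev–Rellich identity adapted to the linearized equation, combined with the minimality of $u$ on $M$ to force the second radial eigenvalue to be positive; I expect this step to be the crux of the whole argument.

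Once every least energy solution is known to be non-degenerate with Morse index one, such solutions are isolated, and uniqueness up to sign follows by the Morse-theoretic scheme of \cite{lin}: each non-degenerate critical point of index one contributes the same fixed local degree, so a global degree (equivalently, Morse-inequality) computation — carried out on the sublevel sets of the energy, whose topology is pinned down by the mountain-pass structure and the $\mathbb{Z}_2$-symmetry $u\mapsto -u$ — forces the number of such solutions to be precisely one pair $\pm u$. This closes the proof modulo the radial non-degeneracy flagged above.
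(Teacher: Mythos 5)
Your opening steps match the paper: index at least one via $Q_u(u)=(1-p)\int_B u^{p+1}<0$ is Proposition \ref{prop:lambda1}, and index at most one via the second variation of the constrained minimization is Proposition \ref{prop:lambda2}. The genuine gap is the step you yourself flag as the crux: non-degeneracy is not proved in your proposal, and the routes you sketch are not the ones that succeed here. The spherical-harmonic decomposition runs into exactly the nonlocal obstructions you anticipate: $\partial_i u$ is not an admissible comparison function (near $\partial B$ one has $u\sim d^s$, so $\nabla u\sim d^{s-1}$ and it does not belong to $H^s_0(B)$ in general), there is no Sturm/Wronskian comparison for $(-\Delta)^s$, and the radial block is left entirely open, with only two untested strategies named. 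The paper's resolution sidesteps the radial block altogether: assuming $\mu_2=0$, it shows by \emph{polarization} (Lemma \ref{lem:extensionofbobkov} and Propositions \ref{prop:analogtobobkov}--\ref{prop:existenceofnonradial}, following Benedikt--Bobkov--Dhara--Girg) that the second eigenspace always contains a \emph{nonradial} eigenfunction, upgrades it to a foliated Schwarz symmetric one via \cite{jarohs}, and antisymmetrizes to obtain a second eigenfunction $v$ that is odd across a hyperplane, has a strict sign on each open half-ball, and satisfies a fractional Hopf lemma $\partial_n^s v\neq 0$ on each open half of $\partial B$ (Proposition \ref{prop:existenceofantisymmetric}, using Fall--Jarohs). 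A \emph{bilinear} Pohozaev identity, obtained by applying Ros-Oton--Serra's identity centered at $\pm e_1$ to $u\pm v$ and subtracting, then yields $\int_{\partial B}\partial_n^s v\,(e_1\cdot\nu)=0$, contradicting the Hopf sign. None of this mechanism appears in your proposal, so the central claim of the theorem remains unproven.

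Your closing degree/Morse-inequality count is also not what the paper does, and as stated it does not close: on the constraint manifold (which is contractible), the Morse relations give an alternating sum of critical point counts, not $C_0=1$, unless you control all higher critical levels; the $\mathbb{Z}_2$-symmetry does not obviously repair this. The paper instead runs a continuation argument in $p$: non-degeneracy makes the linearization of $F(u,p)=(-\Delta)^s u-\lambda u-u^p$ invertible, the Implicit Function Theorem gives local uniqueness of the solution branch in $p$, and the branch is anchored at $p$ close to $1$, where uniqueness of positive (hence of least-energy) solutions is known from \cite{diebiannisaldana}. If you wish to keep a purely degree-theoretic ending you would need to supply a complete argument; as written it is a sketch resting on the unproved non-degeneracy.
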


 While Lin's ideas can be carried over without major issues in order to show that the Morse index of least-energy solutions is \emph{at least} one, the difficult part is to show that they are non-degenerate. Indeed, one is naturally led to the study of the linearized problem
\begin{equation}\label{eq:mainlinearized}
	\left\{\begin{array}{r c l l}
		(-\Delta)^s v - p u^{p-1}v &=& \mu v & \text{in }B, \\ v &=& 0 & \text{in }\R^N \setminus B,
	\end{array}\right.
\end{equation}
and precise information on the structure of second eigenfunctions is needed. Unlike in the local case, Courant's nodal domain theorem is not available, and neither is Hopf's Lemma for general sign-changing solutions (we mention however the recent results by Dipierro, Soave and Valdinoci \cite{dipierrosoavevaldinoci} which require a certain growth condition at the boundary). Even for the standard eigenvalue problem for the fractional Laplacian
\begin{equation}\label{eq:mainstandardeigenvalue}
\left\{\begin{array}{r c l l}
	(-\Delta)^s v  & = & \lambda v & \text{in }B, \\ v & = & 0 & \text{in }\R^N \setminus B,
\end{array}\right.
\end{equation}
it was shown only very recently, in \cite{fallfeulefacktemgouaweth} and \cite{benediktbobkovdharagirg}, that second eigenfunctions have exactly two nodal domains, and are antisymmetric, so that each nodal domain is a half-ball.

In the case under consideration in this paper, we are able to prove the following characterization of second eigenfunctions for problem \eqref{eq:mainlinearized}: 
\begin{itemize}
\item If $v$ is a radially symmetric second eigenfunction, then it changes  sign at most two times in the radial direction;
\item If there exists a nonradial second eigenfunction, then there exists an antisymmetric second eigenfunction $v$, with exactly two nodal domains which are half balls.
\end{itemize}	
In both cases, these eigenfunctions satisfy a fractional Hopf's Lemma. For antisymmetric eigenfunctions, this was proven in \cite{fallfeulefacktemgouaweth}, while the corresponding result for radially symmetric eigenfunctions is obtained in this paper. This allows to extend the result by Lin to the nonlocal case.

Let us mention that the very recent results by Fall and Weth \cite[Theorem 1.2]{fallwethonedim} and by Dieb, Ianni, and Salda\~{n}a \cite[Corollary 1.3]{diebiannisaldana2} actually rule out the possibility for \eqref{eq:mainlinearized} to admit antisymmetric second eigenfunctions. Nevertheless, we intend to discuss this case in our exposition as well, since we believe that it might be useful in some other context. 

A comment on the range of the exponent $p \in \left(1, \frac{N+2s}{N-2s}\right)$ in Theorem \ref{thm:maintheorem} is in order. If $p \in \left[ \frac{N+2s}{N-2s}, +\infty\right)$, equation \eqref{eq:mainball} does not admit any nontrivial positive solution, due to the fractional Pohozaev identity (see \cite[Corollary 1.3]{rosotonserra}) and Hopf's Lemma for positive solutions (see \cite[Lemma 3.1]{grecoservadei}).  If $p \in (0,1)$, the nonlinearity becomes \emph{sublinear}, and uniqueness of positive solutions holds true for \emph{every} domain $\Omega \subset \R^N$, as shown in \cite[Theorem 6.1]{brascofranzina}; the proof consists in showing that all positive solutions are in fact least-energy solutions and, subsequently, obtaining uniqueness of least-energy solutions by exploiting a particular convexity property of the energy functional, which the authors call \emph{hidden convexity}. Finally, the case $p=1$ is related to the eigenvalue problem for the fractional Laplacian, for which uniqueness - up to multiplicative constants - of the first eigenfunction follows from classical spectral theory (see, for instance, \cite[Theorem 2.8]{brascoparini}). 

The paper is structured as follows. After recalling the functional setting and the basic notions about least-energy solutions, in Section 3 we provide some basic results about the linearized problem and, in Section 4, we show the validity of the fractional Hopf's Lemma for the second eigenfunctions. Section 5 contains the proof of the nondegeneracy and uniqueness of least-energy solutions in the ball. Finally, in Section 6 we discuss some open questions.

\subsection*{Acknowledgements} The paper was initiated during a visit of A. DlT. to Marseille in May 2022, and continued during a visit of E. P. to Rome in January 2023. The authors express their gratitude to the hosting institutions for the hospitality that helped the development of this paper.

The authors would like to thank Mouhamed Moustapha Fall, Mar\'{i}a del Mar Gonz\'{a}lez, Mateusz Kwa\'{s}nicki, Xavier Ros-Oton, Giorgio Tortone, and Tobias Weth for valuable suggestions.

The results of this paper, as well as the main elements of the proofs, were first announced at the ``XII Workshop on Nonlinear Differential Equations'', held in Bras\'{\i}lia on September 11th-15th, 2023. Organizers and funding agencies are gratefully acknowledged.

A. DlT. acknowledges financial support from the Spanish Ministry of Science and Innovation (MICINN), through the IMAG-Maria de Maeztu Excellence Grant CEX2020-001105-M/AEI/ 10.13039/501100011033. She is also supported by the FEDER-MINECO Grants PID2021- 122122NB-I00 and PID2020-113596GB-I00; RED2022-134784-T, funded by MCIN/AEI/10.13039/ \newline 501100011033 and by J. Andalucia (FQM-116); Fondi Ateneo – Sapienza Università di Roma; PRIN (Prot. 20227HX33Z) and INdAM-GNAMPA Project 2023, codice CUP E53C2200193000 and INdAM -GNAMPA Project 2024, codice CUP E53C23001670001.

\section{Notations and preliminary results} \label{sec:preliminary}

\subsection{Functional setting}
Let $\Omega \subset \R^N$ be an open set with Lipschitz boundary, and let $s \in (0,1)$. We define the fractional Sobolev space $H^s_0(\Omega)$ as
\[ H^s_0(\Omega) := \{ u \in L^2(\R^N)\,|\,u = 0 \text{ in } \R^N \setminus \Omega,\,[u]_{H^s_0(\Omega)} < +\infty\},\]
where $[ \cdot ]_{H^s_0(\Omega)}$ is the \emph{Gagliardo seminorm} defined as
\[ [u]_{H^s_0(\Omega)} := \left( \int_{\R^N} \int_{\R^N} \frac{|u(x)-u(y)|^2}{|x-y|^{N+2s}}\,dx\,dy\right)^{\frac{1}{2}}.\]
It turns out that $u \mapsto [u]_{H^s_0(\Omega)}$ is a norm on $H^s_0(\Omega)$, which is equivalent to the full norm
\[ u \mapsto \|u\|_{L^2(\Omega)} + [u]_{H^s_0(\Omega)}\]
(see for instance \cite[Remark 2.5]{brascolindgrenparini}). $H^s_0(\Omega)$ is also a Hilbert space when endowed with the scalar product
\[ \langle u, v \rangle :=  \int_{\R^N} \int_{\R^N} \frac{(u(x)-u(y))(v(x)-v(y))}{|x-y|^{N+2s}}\,dx\,dy.\]
Moreover, the embedding
\[ H^s_0(\Omega) \hookrightarrow L^{p+1}(\Omega)\]
is compact for every $p \in \left(1,\frac{N+2s}{N-2s}\right)$ (see for instance \cite[Corollary 2.8]{brascolindgrenparini}).

\subsection{Least-energy solutions}
Let $\Omega \subset \R^N$ be an open set with Lipschitz boundary. The \emph{first eigenvalue} $\lambda_1(\Omega)$ of the fractional Laplacian under homogeneous Dirichlet conditions is the smallest $\lambda \in \R$ such that the eigenvalue problem
\begin{equation*} 
	\left\{\begin{array}{r c l l}
		(-\Delta)^s u &= & \lambda u  & \text{in }\Omega, \\ u &= & 0 & \text{in }\R^N \setminus \Omega,
	\end{array}\right.
\end{equation*}
admits a nontrivial solution in $H^s_0(\Omega)$. $\lambda_1(\Omega)$ can be characterized variationally as
\[ \lambda_1(\Omega) = \inf_{u \in H^s_0(\Omega) \setminus \{0\}} \frac{\displaystyle  \int_{\R^N} \int_{\R^N} \frac{|u(x)-u(y)|^2}{|x-y|^{N+2s}}\,dx\,dy}{\displaystyle \int_\Omega u^2}.\]
A \emph{least energy solution} of the equation
\begin{equation} \label{eq:mainpreliminary}
	\left\{\begin{array}{r c l l}
		(-\Delta)^s u &= & u^p & \text{in }\Omega, \\ u &= & 0 & \text{in }\R^N \setminus \Omega,
	\end{array}\right.
\end{equation}
is a solution obtained by minimization on $H^s_0(\Omega) \setminus \{0\}$ of the functional $\Phi :H^s_0(\Omega)\to \R$ defined as
\[ \Phi(u) = \int_{\R^N} \int_{\R^N} \frac{|u(x)-u(y)|^2}{|x-y|^{N+2s}}\,dx\,dy ,\]
under the constraint
\[ \int_\Omega |u|^{p+1} = 1.\]
The existence of a minimizer follows by the direct method of the Calculus of Variations, together with the compactness of the embedding $H^s_0(\Omega) \hookrightarrow L^{p+1}(\Omega)$. By generalizing \cite[Proposition 3.1]{franzinalicheri}, one can prove that minimizers are bounded in $\Omega$, and therefore belong to $C^s(\overline{\Omega})$ by \cite[Proposition 7.2]{rosoton}. Moreover, assuming some regularity on the boundary of $\Omega$, the function
\[ x \mapsto \frac{u(x)}{d(x)^s},\]
where $u$ solves \eqref{eq:mainpreliminary}, and $d(x) = \text{dist}(x;\partial \Omega)$, is also H\"{o}lder-continuous up to the boundary (see \cite[Proposition 7.4]{rosoton}). This allows to define a suitable notion of normal derivative:
\[ \partial_n^s u(z) := \lim_{\Omega \ni x \to z} \frac{u(x)}{d(x)^s}.\]
Least energy solutions of \eqref{eq:mainpreliminary} do not change their sign in $\Omega$. To see this, we first observe that, if $u \in H^s_0(\Omega) \setminus \{0\}$ is a minimizer such that $u^+,\,u^- \not \equiv 0$, then it holds
\[ \Phi(|u|) < \Phi(u),\]
a contradiction. Therefore, without loss of generality, $u$ is nonnegative in $\Omega$. Moreover, by \eqref{eq:mainpreliminary} it holds $(-\Delta)^s u \geq 0$ in $\Omega$ so that, by the strong maximum principle (see \cite[Theorem 2.1]{grecoservadei}), $u > 0$ in $\Omega$.

In the particular case where $\Omega$ is a ball, a generalization of the moving plane principle to the fractional Laplacian \cite{chenlili} allows to conclude that (positive) least energy solutions are radially symmetric, and radially decreasing. 

\section{The linearized problem} \label{sec:linearized}

In the following, $\Omega \subset \R^N$ will be an open set with Lipschitz boundary, and $u$ will be a least-energy solution of \eqref{eq:mainpreliminary}, where $s \in (0,1)$, and $p \in \left(1, \frac{N+2s}{N-2s}\right)$. In order to apply Morse theory, we are led to investigate the eigenvalue problem for the linearized operator
\[ v \mapsto (-\Delta)^s v - p u^{p-1} v,\]
that is,
	\begin{equation} \label{eq:eigenvalueproblem} \left\{ \begin{array}{r c l l} (-\Delta)^s v - pu^{p-1}v  & = &  \mu v & \text{in }B \\ v & = & 0 & \text{in }\R^N \setminus B.\end{array} \right. \end{equation}
By standard spectral theory, this problem admits a sequence of eigenvalues
\[ \mu_1 \leq \mu_2 \leq \dots \leq \mu_k \leq ... \]
where the eigenvalues are repeated according to their multiplicity, and the corresponding eigenfunctions are critical points of the functional $J:H^s_0(\Omega) \setminus \{0\} \to \R$ defined as
\[ J(v) := \frac{\displaystyle \int_{\R^N} \int_{\R^N} \frac{|v(x)-v(y)|^2}{|x-y|^{N+2s}}\,dx\,dy - p \int_\Omega u^{p-1}v^2 }{\displaystyle \int_\Omega v^2}.\]
In particular, it holds
\[ \mu_1 = \inf_{v \in H^s_0(\Omega) \setminus \{0\}} J(v).\]

The \emph{Morse index} of $u$ is the number of negative eigenvalues of the linearized problem \eqref{eq:eigenvalueproblem}. In the following we will prove that $u$ has Morse index equal to one, and that it is \emph{non-degenerate}, which means that $0$ is not an eigenvalue of \eqref{eq:eigenvalueproblem}. This amounts to prove that $\mu_1 < 0$, and $\mu_2 > 0$.

\begin{prop} \label{prop:lambda1}
Let $\mu_1$ be the first eigenvalue of \eqref{eq:eigenvalueproblem}. Then, \[\mu_1 < 0.\] Moreover, there exists a unique first eigenfunction $\varphi \in H^s_0(\Omega)$ (up to a nonzero multiplicative constant), which does not change its sign in $\Omega$. In particular,
\[ \mu_1 < \mu_2.\]
If $\Omega$ is a ball, then any positive (resp. negative) first eigenfunction is radially symmetric, and radially decreasing (resp. radially increasing).
\end{prop}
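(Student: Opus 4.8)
The plan is to prove the three assertions—$\mu_1<0$, the existence of a sign-definite first eigenfunction, and the strict inequality $\mu_1<\mu_2$—in turn, using the least-energy solution $u$ itself as a test function for the first, and the strong maximum principle for the remaining two. I would begin with $\mu_1<0$, obtained by testing the Rayleigh quotient $J$ against $u$. Since $u$ solves \eqref{eq:mainpreliminary} weakly, choosing $u$ as a test function gives $[u]_{H^s_0(\Omega)}^2=\lambda\int_\Omega u^2+\int_\Omega u^{p+1}$. Substituting this identity into the numerator of $J(u)$, the terms $\lambda\int_\Omega u^2$ cancel and the $u^{p+1}$-terms combine, yielding
\[ J(u)=(1-p)\,\frac{\int_\Omega u^{p+1}}{\int_\Omega u^2}. \]
As $p>1$ and $u>0$ in $\Omega$, this quantity is strictly negative, whence $\mu_1\le J(u)<0$.

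Next I would establish that $\mu_1$ is attained and that every minimizer has constant sign. Existence of a minimizer $\varphi$ follows from the direct method: a minimizing sequence normalized by $\int_\Omega v^2=1$ is bounded in $H^s_0(\Omega)$, because the term $p\int_\Omega u^{p-1}v^2$ is controlled by $p\|u\|_{L^\infty(\Omega)}^{p-1}$ (recall $u\in L^\infty(\Omega)$), and compactness of $H^s_0(\Omega)\hookrightarrow L^2(\Omega)$ together with weak lower semicontinuity of the Gagliardo seminorm passes to the limit. For the sign, I would use that $[\,|\varphi|\,]_{H^s_0(\Omega)}\le[\varphi]_{H^s_0(\Omega)}$ while the other integrals in $J$ are unchanged under $\varphi\mapsto|\varphi|$; hence $|\varphi|$ is also a minimizer, and therefore solves the eigenvalue equation with eigenvalue $\mu_1$. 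Rewriting this equation as $(-\Delta)^s|\varphi|=(\mu_1+\lambda+pu^{p-1})|\varphi|$ with bounded right-hand coefficient, and absorbing a large constant to render the coefficient nonnegative, the strong maximum principle (\cite[Theorem 2.1]{grecoservadei}) yields $|\varphi|>0$ in $\Omega$. Since the eigenfunctions inherit the same boundedness and $C^s(\overline{\Omega})$ regularity as solutions of \eqref{eq:mainpreliminary}, and $\Omega$ is connected, $\varphi$ cannot vanish in $\Omega$ and hence has a fixed sign.

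Finally, simplicity—equivalently $\mu_1<\mu_2$—would follow from the constant-sign property. Suppose $\varphi,\psi$ are two first eigenfunctions; by the previous step both may be taken strictly positive in $\Omega$. Fixing $x_0\in\Omega$, the function $w:=\psi(x_0)\varphi-\varphi(x_0)\psi$ is again a first eigenfunction and satisfies $w(x_0)=0$. If $w\not\equiv 0$ it would have constant sign, hence be strictly positive or negative throughout $\Omega$, contradicting $w(x_0)=0$; therefore $w\equiv 0$ and $\varphi,\psi$ are proportional. The eigenspace associated with $\mu_1$ is thus one-dimensional, which gives both the uniqueness of $\varphi$ up to a multiplicative constant and the strict inequality $\mu_1<\mu_2$.

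I expect the main difficulty to lie not in the variational computation but in the two places where the nonlocal nature of the problem enters: guaranteeing enough regularity of the eigenfunctions to evaluate them pointwise, and invoking a strong maximum principle valid for the fractional Laplacian with a bounded, sign-indefinite potential. Both are available in the cited references, but they must be applied with care, since the classical tools normally used here—Courant's nodal domain theorem and Hopf's Lemma for sign-changing functions—are unavailable in the fractional setting, precisely the obstruction emphasized in the introduction.
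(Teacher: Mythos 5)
Your proposal is correct and follows essentially the same route as the paper: $\mu_1<0$ by testing $J$ with $u$, constant sign of the minimizer via comparison of $J(\varphi)$ with $J(|\varphi|)$ together with a nonlocal strong maximum principle, and simplicity (hence $\mu_1<\mu_2$) deduced from the sign-definiteness of all first eigenfunctions. The only cosmetic differences are that the paper rules out sign changes directly through the \emph{strict} inequality $J(|\varphi|)<J(\varphi)$, obtains strict positivity by evaluating $(-\Delta)^s\varphi$ pointwise at a putative interior zero (rather than by absorbing the potential into a constant), and derives simplicity from the impossibility of $L^2$-orthogonality of two sign-definite eigenfunctions instead of your linear-combination argument.
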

\begin{proof}
	It holds
	\[ J(u) =  \frac{\displaystyle (1-p)\int_\Omega u^{p+1}}{\displaystyle \int_\Omega u^2} < 0,\]
	so that clearly $\mu_1 < 0$. By standard variational methods, it can be shown that the infimum is attained for a nontrivial eigenfunction $\varphi \in H^s_0(\Omega) \setminus \{0\}$. We now prove that this eigenfunction is positive. First, we show that it is nonnegative. If, by contradiction, $\varphi$ changes its sign in $\Omega$, so that $\varphi^+$, $\varphi^- \not\equiv 0$, it would hold
	\[ J(|\varphi|) < J(\varphi),\]
	a contradiction. By the Maximum Principle, all eigenfunctions associated to $\mu_1$ are strictly positive (or negative) in $\Omega$. Indeed, let $\varphi\geq 0$ satisfy 
		\begin{equation} \left\{ \begin{array}{r c l l} (-\Delta)^s \varphi -  pu^{p-1} \varphi  & = & \mu_1 \varphi& \text{in }B \\ \varphi & = & 0 & \text{in }\R^N \setminus B,\end{array} \right. \end{equation}  and suppose there exists $x_0\in B$ such that $\varphi(x_0)=0$. Then \[(-\Delta)^s \varphi(x_0)= \int_{\Omega}\frac{\varphi(y)}{|y-x_0|^{N+2s}}\, dy=0,\] which is a contradiction since $\varphi \not\equiv 0$.

		Finally, if $\mu_1$ was not simple, it would be possible to find two eigenfunctions $\varphi_1$ and $\varphi_2$ which are orthogonal in $L^2(\Omega)$, so that
	\[ \int_\Omega \varphi_1 \varphi_2 = 0,\]
	a contradiction to the strict positivity (or negativity) of first eigenfunctions.

Let us now suppose that $\Omega$ is a ball, and let $\varphi \in H^s_0(\Omega)$ be a positive first eigenfunction. Let $\varphi^* \in H^s_0(\Omega)$ be the Schwarz symmetrization of $\varphi$, as defined for instance in \cite[Definition 1.3.1]{kesavan}. Since Schwarz symmetrization decreases the Gagliardo seminorm (see \cite[Section 9.1]{almgrenlieb}), it holds
\[ \int_{\R^N} \int_{\R^N} \frac{|\varphi^*(x)-\varphi^*(y)|^2}{|x-y|^{N+2s}}\,dx\,dy \leq \int_{\R^N} \int_{\R^N} \frac{|\varphi(x)-\varphi(y)|^2}{|x-y|^{N+2s}}\,dx\,dy.\]
Moreover, by the Hardy-Littlewood inequality we have
\[ - \int_\Omega u^{p-1}(\varphi^*)^2 \leq - \int_\Omega u^{p-1}\varphi^2,\]
where we use the fact the $u$ is radially symmetric, and radially decreasing. Finally, it is well-known that Schwarz symmetrization preserves the $L^2$-norm. All in all, we obtain
\[ J(\varphi^*) \leq J(\varphi)\]
which implies, by uniqueness, that $\varphi=\varphi^*$. As a consequence, $\varphi$ is radially symmetric, and radially decreasing. 
\end{proof}

\begin{prop} \label{prop:lambda2}
	Let $\mu_2$ be the second eigenvalue of \eqref{eq:eigenvalueproblem}. Then, \[\mu_2 \geq 0.\]
\end{prop}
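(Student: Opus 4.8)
The plan is to deduce $\mu_2 \geq 0$ from a \emph{second-order optimality condition} satisfied by the least-energy solution. The key observation is that $u$ is, up to rescaling, a constrained minimizer of $\Phi$, and that the constraint $\int_\Omega |\phi|^{p+1}=1$ cuts out a manifold of codimension one in $H^s_0(\Omega)$. The second variation of a constrained minimizer is nonnegative on the tangent space to the constraint, and I expect this to translate into the statement that the quadratic form
\[ Q(v) := [v]_{H^s_0(\Omega)}^2 - p\int_\Omega u^{p-1}v^2 - \lambda\int_\Omega v^2, \]
which appears in the numerator of $J$, is nonnegative on a subspace of codimension one. Since the number of negative eigenvalues of $Q$ equals the maximal dimension of a subspace on which $Q$ is negative definite, such a bound limits the number of negative eigenvalues to one, i.e. yields $\mu_2 \geq 0$.

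Concretely, I would first let $\phi_0 \in H^s_0(\Omega)$ be a minimizer of $\Phi$ under $\int_\Omega |\phi|^{p+1}=1$, so that by the Lagrange multiplier rule there is a $\gamma > 0$ (positivity following from $\lambda < \lambda_1(\Omega)$, which forces $\Phi(\phi_0)>0$) with $(-\Delta)^s\phi_0 - \lambda\phi_0 = \tfrac{\gamma(p+1)}{2}\,|\phi_0|^{p-1}\phi_0$ in the weak sense. Taking $\phi_0 > 0$ and rescaling $u = t\phi_0$ with $t^{p-1} = \tfrac{\gamma(p+1)}{2}$ produces the least-energy solution of \eqref{eq:mainpreliminary}, and gives the relation $\gamma\,\phi_0^{p-1} = \tfrac{2}{p+1}u^{p-1}$. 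Writing out the constrained second-variation inequality $\Phi''(\phi_0)[v,v] - \gamma\,G''(\phi_0)[v,v] \geq 0$, valid for every $v$ in the tangent space $T_{\phi_0} = \{v : \int_\Omega \phi_0^p v = 0\}$ with $G(\phi) = \int_\Omega |\phi|^{p+1}$, and substituting $\Phi''(\phi_0)[v,v] = 2[v]_{H^s_0(\Omega)}^2 - 2\lambda\int_\Omega v^2$ together with $G''(\phi_0)[v,v] = p(p+1)\int_\Omega |\phi_0|^{p-1}v^2$, the rescaling factors conspire so that the coefficient of $\int_\Omega u^{p-1}v^2$ becomes exactly $p$. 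This produces $Q(v) \geq 0$ for all $v$ in the codimension-one subspace $W := \{v \in H^s_0(\Omega) : \int_\Omega u^p v = 0\}$, where I use that $T_{\phi_0} = W$ because $\phi_0^p$ and $u^p$ are proportional.

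Finally, I would conclude via the Courant--Fischer (min-max) characterization $\mu_2 = \min_{\dim V = 2}\;\max_{0 \neq v \in V} Q(v)/\|v\|_{L^2(\Omega)}^2$, which is available since the linearized operator has compact resolvent. If $\mu_2 < 0$ there would exist a two-dimensional subspace $V$ on which $Q$ is negative definite; as $W$ has codimension one, a dimension count gives $V \cap W \neq \{0\}$, and any $0 \neq v \in V \cap W$ would satisfy simultaneously $Q(v) < 0$ and $Q(v) \geq 0$, a contradiction. Hence $\mu_2 \geq 0$. The step requiring the most care is the justification of the second-variation inequality in the fractional setting—namely that $\Phi$ and $G$ are of class $C^2$ on $H^s_0(\Omega)$, that the constraint is a genuine $C^1$ manifold near $\phi_0$, and, above all, the bookkeeping of the rescaling $\phi_0 \mapsto u$ that makes the coefficient land precisely on $p$. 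I stress that this argument only yields the non-strict bound $\mu_2 \geq 0$: the strict inequality $\mu_2 > 0$ required for non-degeneracy is the genuinely harder statement, and is exactly where the structure of antisymmetric second eigenfunctions and the Hopf-type lemma enter.
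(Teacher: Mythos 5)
Your argument is correct and is essentially the paper's own proof in a different dress: the paper obtains the same second-order inequality by computing $\psi''(0)\geq 0$ for the Rayleigh quotient $t\mapsto ([\tilde u+tw]_{H^s_0(\Omega)}^2-\lambda\|\tilde u+tw\|_{L^2(\Omega)}^2)/\|\tilde u+tw\|_{L^{p+1}(\Omega)}^2$, which yields nonnegativity of the quadratic form $Q$ plus an explicit rank-one term vanishing precisely on your subspace $W=\{w:\int_\Omega u^p w=0\}$, and then concludes via the same min-max principle. The constrained Lagrange-multiplier formulation you use and the quotient formulation are equivalent, and your rescaling bookkeeping landing the coefficient on $p$ checks out.
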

\begin{proof}
The proof goes as in \cite[Lemma 1]{lin}. Let $\tilde{u} \in H^s_0(\Omega)$ be defined as
\[ \tilde{u}(x) = m^{-\frac{1}{p-1}} \cdot u(x),\]
where
\begin{equation} \label{eq:rayleighfortheproof} m = \inf_{v \in H^s_0(\Omega) \setminus \{0\}} \frac{[v]_{H^s_0(\Omega)}^2 }{\|v\|_{L^{p+1}(\Omega)}^2}.\end{equation}
By definition of least energy solution, $\tilde u$ is a minimizer for \eqref{eq:rayleighfortheproof}. As a consequence, for a fixed $w \in H^s_0(\Omega)$, if one defines $\psi:\R \to \R$ as
\[ \psi(t) := \frac{[\tilde{u}+tw ]_{H^s_0(\Omega)}^2  }{\|\tilde{u}+tw\|_{L^{p+1}(\Omega)}^2},\]
it holds $\psi'(0)=0$, and $\psi''(0) \geq 0$. This condition translates into
\[  \int_{\R^N} \int_{\R^N} \frac{|w(x)-w(y)|^2}{|x-y|^{N+2s}}\,dx\,dy - pm\int_\Omega \tilde{u}^{p-1} w^2 + \frac{p+1}{2}\left(\int_\Omega \tilde{u}^p w\right)^2 \geq 0\]
that is,
\begin{equation} \label{eq:inequalityforsecondeigenvalue}  \int_{\R^N} \int_{\R^N} \frac{|w(x)-w(y)|^2}{|x-y|^{N+2s}}\,dx\,dy  - p\int_\Omega u^{p-1} w^2 + m^{-\frac{2p}{p-1}}\cdot \frac{p+1}{2}\left(\int_\Omega u^p w\right)^2 \geq 0.\end{equation}
Using the minimax principle for $\mu_2$ \cite[Theorem 12.1]{LiebLoss}, which states that
\begin{equation} \label{eq:minimax} \mu_2 = \sup_{E \in \mathcal{E}_1} \inf_{v \in E^\perp} \frac{\displaystyle  \int_{\R^N} \int_{\R^N} \frac{|v(x)-v(y)|^2}{|x-y|^{N+2s}}\,dx\,dy - p\int_\Omega u^{p-1} v^2 }{\displaystyle \int_\Omega  v^2},\end{equation}
where $\mathcal{E}_1$ is the set of all $1$-dimensional linear subspaces of $H^s_0(\Omega)$, we obtain from \eqref{eq:inequalityforsecondeigenvalue}
\[ \mu_2 \geq 0\]
by choosing, in relation \eqref{eq:minimax}, \[ E = \left\{ w \in H^s_0(\Omega)\,\bigg |\, \int_\Omega u^p w = 0 \right\}.\]
\end{proof}

In the following, in order to show nondegeneracy of $u$, we will prove that $\mu_2$ is actually strictly positive. To this aim, we need to understand the structure of second eigenfunctions more in detail. Following \cite{benediktbobkovdharagirg}, we prove the following variational principle.

\begin{prop} \label{prop:analogtobobkov}
Let $v \in H^s_0(\Omega)$ be such that $v^+$, $v^- \not \equiv 0$, and 
\[ \mu_2 \int_\Omega (v^+)^2 \geq \langle v, v^+ \rangle - p\int_\Omega u^{p-1} (v^+)^2,\] \[  \mu_2 \int_\Omega (v^-)^2 \geq - \langle v, v^- \rangle - p\int_\Omega u^{p-1} (v^-)^2,\]	
where $\langle \cdot, \cdot \rangle$ is the scalar product on $H^s_0(\Omega)$ as defined in Section \ref{sec:preliminary}. Then, $v$ is a second eigenfunction of \eqref{eq:eigenvalueproblem}.
\end{prop}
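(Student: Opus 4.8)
The plan is to restrict the quadratic form to the two-dimensional subspace $V := \mathrm{span}\{v^+, v^-\}$ (which has dimension two, since $v^+, v^- \not\equiv 0$ have disjoint supports), to show that the Rayleigh quotient $J$ never exceeds $\mu_2$ on $V$, and finally to recover from this that $v$ itself is an eigenfunction. Throughout I write $Q(w,\phi) := \langle w, \phi\rangle - p\int_\Omega u^{p-1} w\phi - \lambda \int_\Omega w\phi$, so that $J(w) = Q(w,w)/\|w\|_{L^2(\Omega)}^2$, and I set $A := \int_\Omega (v^+)^2$, $B := \int_\Omega (v^-)^2$, and $S := \langle v^+, v^-\rangle$.

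First I would rewrite the hypotheses. Using $\int_\Omega v^+ v^- = 0$ together with $\langle v, v^+\rangle = \langle v^+, v^+\rangle - S$ and $\langle v, v^-\rangle = S - \langle v^-, v^-\rangle$, one checks that the two assumptions amount precisely to the scalar inequalities $Q(v^+, v^+) - \mu_2 A \le S$ and $Q(v^-, v^-) - \mu_2 B \le S$. In the basis $(v^+, v^-)$ the form $Q$ and the $L^2$-product restrict to $V$ as the symmetric matrices $M = \left(\begin{smallmatrix} Q(v^+,v^+) & S \\ S & Q(v^-,v^-)\end{smallmatrix}\right)$ and $D = \left(\begin{smallmatrix} A & 0 \\ 0 & B\end{smallmatrix}\right)$ (the off-diagonal entry of $M$ equals $S$ because $v^+ v^- \equiv 0$ kills the potential terms in $Q(v^+, v^-)$). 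Writing $P := Q(v^+,v^+) - \mu_2 A$ and $R := Q(v^-,v^-) - \mu_2 B$, the hypotheses read $P \le S$ and $R \le S$.

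The decisive nonlocal ingredient is that $S = \langle v^+, v^-\rangle < 0$ strictly: the integrand $(v^+(x)-v^+(y))(v^-(x)-v^-(y)) = -v^+(x)v^-(y) - v^+(y)v^-(x)$ is nonpositive everywhere and strictly negative on a set of positive measure. Hence $P, R \le S < 0$, so $PR = (-P)(-R) \ge S^2$, which gives $\det(M - \mu_2 D) = PR - S^2 \ge 0$ while $\mathrm{tr}(M - \mu_2 D) = P + R < 0$; therefore $N := M - \mu_2 D$ is negative semidefinite, i.e. $J(w) \le \mu_2$ for all $w \in V$. On the other hand, the min-max characterization of $\mu_2$ (the dual form of \eqref{eq:minimax}) yields $\sup_{w \in V} J(w) \ge \mu_2$ for any two-dimensional subspace. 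Thus $\max_V J = \mu_2$, and $N$ is a nonzero (since $S \ne 0$) negative semidefinite matrix that cannot be negative definite (otherwise $J < \mu_2$ on $V \setminus \{0\}$); so it is negative semidefinite of rank one and $\det N = 0$. Writing $P = S - a$, $R = S - b$ with $a, b \ge 0$, the equality $PR = S^2$ forces $ab = S(a+b) \le 0$, hence $a = b = 0$ and $P = R = S$. Consequently $N = S\left(\begin{smallmatrix}1&1\\1&1\end{smallmatrix}\right)$, whose kernel is spanned by $(1,-1)$, i.e. by $v^+ - v^- = v$: the function $v$ is, up to a scalar, the unique maximizer of $J$ on $V$, with $J(v) = \mu_2$.

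The main obstacle is the last step, because a maximizer of the Rayleigh quotient over an optimal two-dimensional subspace need not in general be a genuine eigenfunction; I would close this gap using the simplicity of $\mu_1$ established in Proposition \ref{prop:lambda1}. Let $\{\varphi_k\}_{k \ge 1}$ be an $L^2(\Omega)$-orthonormal basis of eigenfunctions of \eqref{eq:eigenvalueproblem}. Since $\mu_1 < \mu_2$ is the only eigenvalue strictly below $\mu_2$, the $L^2$-orthogonal projection onto the span of the eigenfunctions with eigenvalue $< \mu_2$ is just the projection onto $\mathrm{span}\{\varphi_1\}$, whose range is one-dimensional; restricted to the two-dimensional space $V$ it must have nontrivial kernel, so there is $\tilde w \in V \setminus \{0\}$ with $\langle \tilde w, \varphi_1\rangle_{L^2(\Omega)} = 0$, that is $\tilde w = \sum_{k \ge 2} c_k \varphi_k$. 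Then $Q(\tilde w, \tilde w) = \sum_{k \ge 2} \mu_k c_k^2 \ge \mu_2 \|\tilde w\|_{L^2(\Omega)}^2$, so $J(\tilde w) \ge \mu_2$, while $N$ negative semidefinite gives $J(\tilde w) \le \mu_2$; hence $J(\tilde w) = \mu_2$, and $\sum_{k \ge 2}(\mu_k - \mu_2)c_k^2 = 0$ forces $c_k = 0$ whenever $\mu_k > \mu_2$, so that $\tilde w$ is an eigenfunction associated with $\mu_2$. Finally $J(\tilde w) = \mu_2$ places $\tilde w$ in $\ker N = \mathrm{span}\{v\}$, so $v$ is a nonzero multiple of $\tilde w$ and is itself a second eigenfunction of \eqref{eq:eigenvalueproblem}, as desired.
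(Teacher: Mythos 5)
Your proof is correct and follows essentially the same route as the paper's: the paper proves this proposition simply by invoking \cite[Lemma 2.1]{benediktbobkovdharagirg} with $\langle f,g\rangle$ replaced by $Q(f,g)=\langle f,g\rangle - p\int_\Omega u^{p-1}fg-\lambda\int_\Omega fg$, and that argument is precisely what you have written out in full — restriction to $\mathrm{span}\{v^+,v^-\}$, the strict negativity of the nonlocal cross term $\langle v^+,v^-\rangle$, the min-max characterization of $\mu_2$, equality analysis forcing proportionality to $v$, and orthogonality to the simple first eigenfunction from Proposition \ref{prop:lambda1} to conclude. Your $2\times 2$ trace/determinant packaging of $N=M-\mu_2 D$ is only a cosmetic variant of the $(\alpha-\beta)^2$ computation in that reference, so the two proofs are essentially identical.
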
	

\begin{proof}
The proof follows the same lines of \cite[Lemma 2.1]{benediktbobkovdharagirg}.
\end{proof}

We now introduce the definition of \emph{polarization}, a symmetrization technique which turns out to be particularly useful for our purpose.

\begin{definition}
Let $v : \R^N \to \R$, and $a \in \R$. Let \[ H_a := \{ x \in \R^N\,|\,x_1 = a\},\] \[ \Sigma_a^+ := \{x \in \R^N\,|\,x_1 \geq a\},\quad \Sigma_a^- := \{x \in \R^N\,|\,x_1 \leq a\}.\] For $x = (x_1,\dots,x_N) \in \R^N$, denote by $\overline{x} = (2a-x_1,\dots,x_N) \in \R^N$ its reflection with respect to $H_a$. The \emph{polarization} of $v$ with respect to $H_a$ is the function $P_a v : \R^N \to \R$ defined as
\[ (P_a v)(x) := \left\{ \begin{array}{c l} \min\{v(x),v(\overline{x})\} & \text{if }x \in \Sigma_a^+, \\ \max\{v(x),v(\overline{x})\} & \text{if }x \in \Sigma_a^-.
	\end{array}\right.
	\]
\end{definition}

\begin{lem} \label{lem:extensionofbobkov}
	Let $B \subset \R^N$ be a ball of radius $R>0$ centered at the origin, and let $v \in H^s_0(B)$ be a second eigenfunction of \eqref{eq:eigenvalueproblem}. Let $P v$ be the polarization of $v$ with respect to the hyperplane
	\[ H := \{ x \in \R^N\,|\,x_1 = 0\}.\]
	Then, $P v \in H^s_0(B)$, and $P v$ is a second eigenfunction of \eqref{eq:eigenvalueproblem} as well.
\end{lem}
\begin{proof}
	
	First of all, we observe that
	\[ \int_B (v^+)^2 = \int_B ((P v)^+)^2, \qquad \int_B (v^-)^2 = \int_B ((P v)^-)^2\]
	since polarization preserves the $L^2$ norm in $\R^N$ (see for instance \cite[Section 2.3]{benediktbobkovdharagirg}), and all the functions involved have support in $B$. In a similar fashion, due to the radial symmetry of $u$, it holds
	\[ \int_B u^{p-1} (v^+)^2 = \int_B u^{p-1} ((P v)^+)^2, \qquad \int_B u^{p-1} (v^-)^2 = \int_B u^{p-1} ((P v)^-)^2.\]
	The fact that $P v \in H^s_0(B)$, and that
	\[ \langle v, v^+ \rangle \geq \langle P v, (P v)^+ \rangle, \qquad   -\langle v, v^- \rangle \geq - \langle P v, (P v)^- \rangle, \]
	is proven in \cite[Lemma 2.3]{benediktbobkovdharagirg}. 
Thus, we can assert that
	\[ \langle v, v^+ \rangle - p \int_B u^{p-1} (v^+)^2 \geq \langle P v, (P v)^+ \rangle - p \int_B u^{p-1} ((P v)^+)^2,\]
		\[ -\langle v, v^- \rangle - p \int_B u^{p-1} (v^-)^2  \geq -\langle P v, (P v)^- \rangle - p \int_B u^{p-1} ((P v)^-)^2.\]
By Proposition \ref{prop:analogtobobkov}, $P v$ is a second eigenfunction of \eqref{eq:eigenvalueproblem}.
\end{proof}

\section{Hopf's Lemma for the second eigenfunction}\label{Sec:Hopf}
Since no general Hopf's Lemma is available in our setting we need to study two mutually exclusive cases separately. In the first case, we will treat the possibility in which there exists a nonradial second eigenfunction for the linearized problem \eqref{eq:eigenvalueproblem}. We will be able to construct an antisymmetric second eigenvalue for which the Hopf's Lemma for antisymmetric functions proven in \cite{fallfeulefacktemgouaweth}. In the second case, we will provide a new Hopf's Lemma for radially symmetric functions that will be used if no nonradial second eigenfunctions exist.

\subsection{Case 1: \eqref{eq:eigenvalueproblem} admits a nonradial second eigenfunction}
\begin{definition}
	Let $B \subset \R^N$ be a ball of radius $R>0$ centered at the origin, and let $v : B \to \R$. $v$ is \emph{foliated Schwarz symmetric} with respect to $p \in \mathbb{S}^{N-1}$ if:
	\begin{itemize}
		\item $v$ is axially symmetric with respect to $p\R$, so that
		\[ v(x) = \tilde{v}\left( |x|, \arccos\left( \frac{x}{|x|} \cdot p\right)\right)\]
		for some function $\tilde{v}:[0,R] \times [0,\pi] \to \R$.
		\item $\tilde{v}$ is nonincreasing with respect to the second variable.
	\end{itemize}
\end{definition}

\begin{prop} \label{prop:existenceoffoliated}
	Let $B \subset \R^N$ be a ball of radius $R>0$ centered at the origin. Suppose that the eigenvalue problem \eqref{eq:eigenvalueproblem} admits a nonradial second eigenfunction. Then, \eqref{eq:eigenvalueproblem} admits a nonradial, foliated Schwarz symmetric second eigenfunction.
\end{prop}
\begin{proof}
Let $v$ be a nonradial second eigenfunction of \eqref{eq:eigenvalueproblem}. Since $v$ is nonradial, there exist $x_1$, $x_2 \in B$ such that $|x_1|=|x_2|$ and $v(x_1)\neq v(x_2)$. Without loss of generality, we can suppose that the segment connecting $x_1$ and $x_2$ is parallel to $e_1$. Let $P v$ be the polarization with respect to the hyperplane $P:= \{ x \in \R^N \,|\, x_1 = 0\}$. By Lemma \ref{lem:extensionofbobkov} and Proposition \ref{prop:analogtobobkov}, $P v$ is a second eigenfunction. Moreover, $P v$ is nonradial by construction, since $(P v)(x_1) \neq (P v)(x_2)$. Moreover, by \cite[Theorem 1.1]{jarohs}, it is foliated Schwarz symmetric. 
\end{proof}

We are now ready to state the main result of this subsection.

\begin{prop} \label{prop:existenceofantisymmetric}
Let $B \subset \R^N$ be a ball of radius $R>0$ centered at the origin. Define
\[ H:= \{ x \in \R^N \,|\, x_1 = 0\}, \] \[ \mathring{\Sigma}^+ :=  \{ x \in \R^N \,|\, x_1 > 0\}, \quad \mathring{\Sigma}^- :=  \{ x \in \R^N \,|\, x_1 < 0\}.\]
Suppose that the eigenvalue problem
\begin{equation} \label{eq:probleminexistenceofantisymmetric} \left\{ \begin{array}{r c l l} (-\Delta)^s v - pu^{p-1}v & = & \mu v& \text{in }B \\ v & = & 0 & \text{in }\R^N \setminus B\end{array}\right.\end{equation}
admits a nonradial second eigenfunction. Then \eqref{eq:probleminexistenceofantisymmetric} admits a second eigenfunction $v \in H^s_0(B)$ such that:
\begin{itemize}
 \item[(i)] $v$ is antisymmetric with respect to $H$;
 \item[(ii)] $v < 0$ in $B \cap \mathring{\Sigma}^+$, and $v>0$ in $B \cap \mathring{\Sigma}^-$; 
 \item[(iii)] $\partial_n^s v > 0$ on $\partial B \cap \mathring{\Sigma}^+$, and $\partial_n^s v < 0$ on $\partial B \cap \mathring{\Sigma}^-$.
 \end{itemize}
\end{prop}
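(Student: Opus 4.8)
The plan is to build the antisymmetric eigenfunction out of the foliated Schwarz symmetric one produced in Proposition~\ref{prop:existenceoffoliated}, by reflecting it across the equatorial hyperplane and taking a difference, and then to turn the resulting weak sign information into the strict statements (ii)--(iii) via an antisymmetric maximum principle and the Hopf lemma of \cite{falljarohs}. First I would fix coordinates: by Proposition~\ref{prop:existenceoffoliated} there is a nonradial second eigenfunction that is foliated Schwarz symmetric, and after a rotation I may assume it is foliated Schwarz symmetric with respect to $-e_1$, so that, writing $v(x)=\tilde v(|x|,\theta)$ with $\theta=\arccos(-x_1/|x|)$, the profile $\tilde v(\rho,\cdot)$ is nonincreasing for each $\rho$. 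With this normalization the symmetry axis is orthogonal to $H$. Writing $\overline{x}=(-x_1,x_2,\dots,x_N)$ for the reflection across $H$ and $\overline{v}(x):=v(\overline{x})$, the radial symmetry of $u$ gives $u^{p-1}(\overline{x})=u^{p-1}(x)$, while $(-\Delta)^s$ commutes with reflections; hence $\overline{v}\in H^s_0(B)$ solves \eqref{eq:probleminexistenceofantisymmetric} with the same eigenvalue $\mu_2$, and so does
\[ z:=v-\overline{v},\]
viewed as an element of the (linear) $\mu_2$-eigenspace, provided $z\not\equiv 0$.

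That $z$ is antisymmetric with respect to $H$ is immediate from its definition, so (i) holds once $z$ is shown to be a genuine second eigenfunction; it remains to check $z\not\equiv 0$. If $z\equiv 0$, then $v=\overline v$, i.e. $\tilde v(\rho,\theta)=\tilde v(\rho,\pi-\theta)$ for all $\theta$; together with the monotonicity of $\tilde v(\rho,\cdot)$ this forces $\tilde v$ to be constant in $\theta$, so that $v$ would be radial, contradicting the nonradiality guaranteed by Proposition~\ref{prop:existenceoffoliated}. Thus $z$ is a nonzero second eigenfunction of \eqref{eq:probleminexistenceofantisymmetric}.

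To read off the sign of $z$, note that for $x\in B\cap\mathring{\Sigma}^+$ the reflected point $\overline x\in B\cap\mathring{\Sigma}^-$ makes a strictly smaller angle with $-e_1$ than $x$; by the monotonicity of $\tilde v$ this yields $v(\overline x)\ge v(x)$, hence $z\le 0$ in $B\cap\mathring{\Sigma}^+$ and, by antisymmetry, $z\ge 0$ in $B\cap\mathring{\Sigma}^-$. These inequalities are a priori only weak. To upgrade them I would use that $z$ solves $(-\Delta)^s z=(\lambda+pu^{p-1}+\mu_2)\,z$ in $B$ and is antisymmetric: at an interior point $x_0\in B\cap\mathring{\Sigma}^+$ with $z(x_0)=0$, pairing each $y\in\Sigma^+$ with its reflection $\overline y$ (so that $z(\overline y)=-z(y)$) and using $|x_0-y|\le|x_0-\overline y|$ for $x_0,y\in\Sigma^+$ turns the identity $(-\Delta)^s z(x_0)=0$ into the vanishing of an integral with nonpositive integrand, which forces $z\equiv 0$ on $\Sigma^+$, a contradiction. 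This antisymmetric strong maximum principle gives $z<0$ in $B\cap\mathring{\Sigma}^+$ and $z>0$ in $B\cap\mathring{\Sigma}^-$, so that, renaming $z$ as $v$, statement (ii) holds.

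Finally, statement (iii) follows from the Hopf lemma for antisymmetric solutions of fractional equations established in \cite{falljarohs}: applied on the half-ball $B\cap\mathring{\Sigma}^+$, on which $z$ has a fixed strict sign and vanishes on $H\cup(\R^N\setminus B)$, it produces the strict, oppositely signed limits of $z(x)/d(x)^s$ on $\partial B\cap\mathring{\Sigma}^+$ and $\partial B\cap\mathring{\Sigma}^-$ required by (iii). I expect the genuinely nonlocal steps---the antisymmetric strong maximum principle and the antisymmetric Hopf lemma---to be the crux: unlike in the local setting, the sign of $z$ on a single half-space carries no information for a naive maximum principle, and one must exploit antisymmetry to recover a positive reflected kernel (as in the pairing above) and to invoke the boundary estimate of \cite{falljarohs}, after checking that its hypotheses are met for the bounded zeroth-order coefficient $\lambda+pu^{p-1}+\mu_2$.
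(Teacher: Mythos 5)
Your proposal is correct and follows essentially the same route as the paper: take the nonradial foliated Schwarz symmetric second eigenfunction from Proposition~\ref{prop:existenceoffoliated}, antisymmetrize by subtracting its reflection across $H$, rule out the trivial difference via nonradiality, and conclude with the antisymmetric maximum principle and Hopf lemma of \cite{falljarohs}. The only (harmless) differences are your sign convention ($-e_1$ instead of $e_1$) and the fact that you sketch the antisymmetric strong maximum principle by hand via the reflected-kernel pairing, where the paper simply cites \cite[Proposition 3.3, Corollary 3.4]{falljarohs}.
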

\begin{proof}
Let $w$ be the nonradial, foliated Schwarz symmetric second eigenfunction of \eqref{eq:probleminexistenceofantisymmetric}, whose existence is guaranteed by Proposition \ref{prop:existenceoffoliated}. Without loss of generality, we can suppose that it is foliated Schwarz symmetric with respect to $e_1 = (1,0,\dots,0)$.
Define the function
\[ v(x) = w(x)-w(\overline{x}),\]
where $\overline{x}$ is the symmetric point to $x$ with respect to $H$. The function $v$ satisfies \eqref{eq:probleminexistenceofantisymmetric}, it is antisymmetric with respect to $H$ and, by Schwarz foliated symmetry, $v \leq 0$ in $B \cap \mathring{\Sigma}^+$. 

Moreover, $v \not\equiv 0$. Indeed, suppose by contradiction that $v \equiv 0$. This would mean in particular that $w(x)=w(\overline{x})$ for all $x \in \R e_1 \cap B$, which would imply that $w$ is constant on $\{ |x| = c \}$ for all $c \in [0,R]$ due to the foliated Schwarz symmetry. In other words, $w$ would be radially symmetric, a contradiction.

By the maximum principle and Hopf's Lemma for antisymmetric solutions in \cite[Proposition 3.3, Corollary 3.4]{falljarohs}, we have that $v < 0$ in $B \cap \mathring{\Sigma}^+$ and $\partial_n^s v > 0$ on $\partial B \cap \mathring{\Sigma}^+$, the remaining claims following by antisymmetry. 
	
\end{proof}	

\subsection{Case 2: \eqref{eq:eigenvalueproblem} admits only radially symmetric second eigenfunctions}

In this subsection we will first identify a sufficient condition which guarantees the validity of a weak minimum principle. We will then show that radially symmetric second eigenfunctions to \eqref{eq:eigenvalueproblem}, even when slightly perturbed, satisfy indeed this condition. As a consequence, we will be able to show the validity of Hopf's Lemma in this setting.

\begin{prop} \label{prop:weakminimumprinciple}
	Let $B \subset \R^N$ be the unit ball. Let $c \in L^\infty(B)$, and let $h \in H^s_0(B)$. Suppose that there exists $r_0 \in (0,1)$ such that, for every $r \in (r_0,1)$, 
		\begin{equation} \label{eq:conditionforhopf}  \quad \int_{B \setminus A_r} \frac{h(x)}{|x-y|^{N+2s}}\,dx \geq 0 \quad \text{for every } y \in A_r,\end{equation}
		where $A_r := \{ x \in \R^N \,|\, r<|x|<1\}$ .
		Let $r \in (r_0,1)$ be such that $\lambda_1(A_r) > \|c\|_\infty$, where $\lambda_1(A_r)$ denotes the first eigenvalue of the Dirichlet fractional Laplacian in $A_r$, and let $w \in H^s(\R^N)$ satisfy
	\begin{equation}
		\left\{\begin{array}{r c l l}
			(-\Delta)^s w &\geq& c(x)w& \text{in }A_r, \\ w &= & h & \text{in }\R^N \setminus A_r.\end{array}\right.
	\end{equation}
Suppose that $w^-|_{A_r} \in H^s_0(A_r)$. Then $w \geq 0$ in $A_r$.
\end{prop}
\begin{proof}
	Set $w=w^+-w^-$. Testing the equation with $w^- |_{A_r}$ we obtain
	\begin{align*}
		-\int_{A_r} c(x)(w^-)^2 & \leq \int_{\R^N} \int_{\R^N} \frac{(w(x)-w(y))(w^-(x)|_{A_r}-w^-(y)|_{A_r})}{|x-y|^{N+2s}}\,dx\,dy = \\ & \leq -\int_{\R^N} \int_{\R^N} \frac{(w^-(x)|_{A_r}-w^-(y)|_{A_r})^2}{|x-y|^{N+2s}}\,dx\,dy - 2\int_{A_r} \int_{A_r} \frac{w^+(x)w^-(y)}{|x-y|^{N+2s}}\,dx\,dy \\ & - 2\int_{A_r} \int_{B \setminus A_r} \frac{w(x)w^-(y)}{|x-y|^{N+2s}}\,dx\,dy \\ & \leq -\lambda_1(A_r) \int_{A_r}(w^-)^2 - 2 \int_{A_r} w^-(y) \left(\int_{B \setminus A_r} \frac{h(x)}{|x-y|^{N+2s}}\,dx \right)\,dy,
	\end{align*}
Thus, by assumption \eqref{eq:conditionforhopf},
	\begin{equation}\label{1} (\lambda_1(A_r) - \|c\|_\infty )\int_{A_r}(w^-)^2 \leq 0,\end{equation}
which implies $w^- \equiv 0$ in $A_r$.

	\end{proof}

\begin{prop} \label{prop:valueoftheintegral}
Let $B \subset \R^N$ be the unit ball, and let $v \in H^s_0(B)$ be a radially symmetric second eigenfunction of \eqref{eq:eigenvalueproblem}. Then, for any $x_0 \in \partial B$, it holds
\[ \int_{B} \frac{v(y)}{|x_0-y|^{N+2s}}\,dy > 0,\]
where the value of the integral might be $+\infty$.
\end{prop}	
\begin{proof}		
	 Let $\tilde{v}:[0,1] \to \R$ be such that $v(x)=\tilde{v}(|x|)$. Then, by reasoning as in \cite[Proposition 5.4]{franklenzmannsilvestre}, $\tilde{v}$ changes sign at most two times in $[0,1]$. Without loss of generality, we can suppose that $\tilde{v}$ is nonnegative in a left neighbourhood of $1$. Therefore, we have to consider two cases.
	
	 {\bf Case (i):} There exists $R_1 \in (0,1)$ such that $\tilde{v} \leq 0$ in $[0,R_1]$, and $\tilde{v} \geq 0$ in $[R_1,1]$. 
	 
	Let $x_0\in\partial B$. By radial symmetry, it holds
	 \begin{align*}
	 	\int_B \frac{v(y)}{|x_0-y|^{N+2s}}\,dy & = \frac{1}{\mathcal{H}^{N-1}(\partial B)} \int_{\partial B} \left( \int_B \frac{v(y)}{|x-y|^{N+2s}}\,dy \right)\,d\mathcal{H}^{N-1}(x) \\ & = \frac{1}{\mathcal{H}^{N-1}(\partial B)} \int_{B} v(y) \left( \int_{\partial B} \frac{1}{|x-y|^{N+2s}}\,d\mathcal{H}^{N-1}(x) \right)\,dy \\
	 	& = \frac{1}{\mathcal{H}^{N-1}(\partial B)} \int_{B} v(y) g(y)\,dy,
	 \end{align*}
	 where
	 \[ g(y) :=  \int_{\partial B} \frac{1}{|x-y|^{N+2s}}\,d\mathcal{H}^{N-1}(x), \]
	 and $\mathcal{H}^{N-1}(\partial B)$ is the $(N-1)-$dimensional Hausdorff measure of $\partial B$. We observe that $g$ is radially symmetric. Moreover, using some properties of the Gauss hypergeometric function defined by  
	 \begin{equation} \label{eq:hypergeom} {}_2F_1(a,b;c;z):=\frac{\Gamma\left(c\right)}{\Gamma\left(a\right)\Gamma\left(b\right)}\sum_{k=0}^{\infty}\frac{\Gamma\left(a+k\right)\Gamma\left(b+k\right)}{\Gamma\left(c+k\right)}\frac{z^k}{k!},\end{equation}
	 we can rewrite (see for instance \cite[Section 2.5.1]{Magnus}, see also \cite[Section 2.2]{DDGW} and the references therein)
	 \begin{align*}  g(y)&=\int_{\partial B} \frac{1}{|x-y|^{N+2s}}\,d\mathcal{H}^{N-1}(x)=\int_{\partial B} \frac{1}{|1+|y|^2-2|y|\langle\theta,\sigma\rangle|^{\frac{N+2s}{2}}}\,d\mathcal{H}^{N-1}(x) \\ &=c_N\int_0^\pi \frac{\sin^{N-2} \phi_1}{|1+|y|^2-2|y|\cos\phi_1|^{\frac{N+2s}{2}}}\,d\phi_1=c_N\frac{\sqrt{\pi}\Gamma\left(\frac{N-1}{2}\right)}{\Gamma\left(\frac{N}{2}\right)}{}_2F_1\left(\frac{N+2s}{2},1+s;\frac{N}{2}; |y|^2\right),
	 \end{align*}
	 where we have used the rotational invariance of the integral in the variable $\theta$, and $c_N$ is a positive constant depending only on the integral on the other spherical coordinates. 
	
Since the coefficients of the series in \eqref{eq:hypergeom} are positive,  $g$ is radially increasing. Furthermore, it is known \cite[15.4(ii)]{Olde} that as $|y|\to1^-$,
	 \[ {}_2F_1\left(\frac{N+2s}{2},1+s;\frac{N}{2}; |y|^2\right) \sim \frac{\Gamma\left(\frac{N}{2}\right)\Gamma\left( 1+2s\right)}{\Gamma\left(\frac{N}{2}+s\right)\Gamma\left( 1+s\right)} (1-|y|^2)^{-1-2s},\]
	 and therefore

	 \[ g(0)=c_N\frac{\sqrt{\pi}\Gamma\left(\frac{N-1}{2}\right)}{\Gamma\left(\frac{N}{2}\right)}>0\quad 
	 \text{ and }\lim_{|y|\to 1^-} g(y)=+\infty.
	 \]
	 Let $\varphi_1 \in H^s_0(B)$ be a positive first eigenfunction of \eqref{eq:eigenvalueproblem}. By Proposition \ref{prop:lambda1}, $\varphi_1$ is radially symmetric, and radially decreasing. Let $\hat{x} \in B$ be such that $|\hat{x}|=R_1$, which implies that $v(\hat{x}) = 0$. The function
	 \[ y \mapsto g(y)-\frac{g(\hat{x})}{\varphi_1(\hat{x})}\varphi_1(y)\]
	 is radially symmetric, and radially increasing. Moreover, it is negative for $|y| < R_1$, and positive for $|y|>R_1$. As a consequence,
	 \begin{align*}
	 	\int_B \frac{v(y)}{|x_0-y|^{N+2s}}\,dy  &=	\int_B v(y)g(y)\,dy \\ & =\int_B \left[v^+(y)\left(g(y)-\frac{g(\hat{x})}{\varphi_1(\hat{x})}\varphi_1(y)\right)-v^-(y)\left(g(y)-\frac{g(\hat{x})}{\varphi_1(\hat{x})}\varphi_1(y)\right)\right]\,dy\\
	 	&=\int_{\{R_1<|y|<1\}} v^+(y)\underbrace{\left(g(y)-\frac{g(\hat{x})}{\varphi_1(\hat{x})}\varphi_1(y)\right)}_{> 0}-\int_{\{|y| < R_1\}}v^-(y)\underbrace{\left(g(y)-\frac{g(\hat{x})}{\varphi_1(\hat{x})}\varphi_1(y)\right)}_{<0}\,dy\\ &>0.
	 \end{align*}

	 {\bf Case (ii):} There exist $R_1,\,R_2 \in (0,1)$ with $R_1 < R_2$, such that $\tilde{v} \leq 0$ in $[R_1,R_2]$, and $\tilde{v} \geq 0$ in $[0,R_1] \cup [R_2,1]$. 

	 Let $V \in (C \cap L^{\infty})(\R^{N+1}_+)$ be the Caffarelli-Silvestre extension of $v$ in $\R^{N+1}_+:=\R^N \times \R^+$, namely, the solution of
	 \begin{equation} \label{eq:caffarellisilvestre}
		\left\{\begin{array}{r c l l}
		\text{div}(t^{1-2s} \nabla V) &=& 0& \text{in }\R^{N+1}_+, \\ V(x,0) &= & v(x) & \text{in }\R^N,\end{array}\right.
\end{equation}
where $(x,t) \in \R^N \times \R^+$, and $v$ is extended by zero outside of $B$. Since $v$ is radially symmetric, $V$ enjoys cylindrical symmetry. By \cite[Proposition 5.4]{franklenzmannsilvestre}, $V$ has exactly two nodal domains \[ \mathcal{N}^+ := \left\{(x,t) \in \R^N \times \R^+ \,|\,V(x,t) >0\right\},\quad \mathcal{N}^- := \left\{(x,t) \in \R^N \times \R^+ \,|\,V(x,t) <0\right\}.\]
Let $e_1 := (1,0,\dots,0) \in \R^N$, and let $x_1$, $x_2 \in \R e_1$ be such that $|x_1|<R_1$, $R_2<|x_2|<1$, and $v(x_1)$, $v(x_2) >0$. Such points exist since, by the unique continuation property for the fractional Laplacian as stated in \cite[Theorem 1.2]{ghoshsalouhlmann}, $v$ can not be identically zero on an open set. Let $x_0 = e_1 \in \partial B$. 
Since $\mathcal{N}^+$ is connected, there exists a continuous path $\gamma:[0,1] \to \R^{N+1}_+$ contained in $\mathcal{N}^+ $ such that $\gamma(0) = x_1$, $\gamma(1)=x_2$. By cylindrical symmetry of $V$, we can suppose without loss of generality that $\gamma((0,1)) \subset \mathcal{N}^+ \cap (\R^+ e_1 \times \R^+)$.
This means that $\mathcal{N}^-$ is enclosed between the surface generated by the rotation of $\gamma$ around the positive semiaxis, and the hyperplane $\R^N$. Since $\gamma([0,1])$ is compact, and $x_0 \not \in \gamma([0,1])$, the distance between $x_0$ and $\gamma([0,1])$ is positive, and so is the distance between $x_0$ and $\mathcal{N}^-$. In particular, there exists a neighbourhood $W \subset \R^{N+1}_+$ such that $V\geq 0$ in $ \mathring{W}$. The strong maximum principle for strictly elliptic operators guarantees that either $V\equiv 0$ or  $V>0$ in $ \mathring{W}$ (see for example \cite[Remark 4.2]{cabresire}). The former case cannot occur because otherwise by continuity we would have $v=0$ in an open set, a contradiction to the unique continuation property for the fractional Laplacian \cite[Theorem 1.2]{ghoshsalouhlmann}. Since $V(x_0,0)=v(x_0)=0$, the Hopf's Lemma for the Caffarelli-Silvestre extension \cite[Proposition 4.11]{cabresire} implies
\begin{equation} \label{eq:hopfcaffsilv}  \liminf_{t \to 0^+} \frac{V(x_0,t)-V(x_0,0)}{t^{2s}} > 0.\end{equation}
Arguing as in \cite[Section 3.1]{caffarellisilvestre} we obtain, by using the change of variable $z= \left( \frac{t}{2s}\right)^{2s}$,
\[ \frac{V(x_0,z)-V(x_0,0)}{z} = C \int_{\R^N} \frac{v(y)}{\left(|x_0-y|^2 + 4s^2 |z|^2\right)^{\frac{N+2s}{2}}}\,dy\]
for every $z>0$, where $C$ is a constant depending on $N$ and $s$. Since $v$ is nonnegative in a neighbourhood of $\partial B$, there exists $\delta >0$ such that $v$ is nonnegative in $B_\delta(x_0)$. Let us decompose the integral into the sum of two terms:
\begin{align*}  \int_{\R^N} \frac{v(y)}{\left(|x_0-y|^2 + 4s^2 |z|^2\right)^{\frac{N+2s}{2}}}\,dy  & =  \int_{\R^N \setminus B_\delta(x_0)} \frac{v(y)}{\left(|x_0-y|^2 + 4s^2 |z|^2\right)^{\frac{N+2s}{2}}}\,dy \\ & +  \int_{B_\delta(x_0)} \frac{v(y)}{\left(|x_0-y|^2 + 4s^2 |z|^2\right)^{\frac{N+2s}{2}}}\,dy.\end{align*}
For $z \to 0^+$, the first integral converges to
\[ \int_{\R^N \setminus B_\delta(x_0)} \frac{v(y)}{|x_0-y|^{N+2s}}\,dy \]
by Lebesgue's Dominated Convergence Theorem, while the second integral tends to 
\[ \int_{B_\delta(x_0)} \frac{v(y)}{|x_0-y|^{N+2s}}\,dy \]
by the Monotone Convergence Theorem. Observe that this last quantity might be equal to $+\infty$.
By \eqref{eq:hopfcaffsilv} we obtain that
\[ \int_{\R^N} \frac{v(y)}{|x_0-y|^{N+2s}}\,dy  > 0,\]
where the value of the integral might be equal to $+\infty$.
\end{proof}

As a direct consequence, we obtain that any radial second eigenfunction of \eqref{eq:eigenvalueproblem} satisfies Hopf's Lemma.

\begin{prop} \label{prop:hopflemmaradial}
Let $B \subset \R^N$ be the unit ball, and let $v \in H^s_0(B)$ be a radially symmetric second eigenfunction of \eqref{eq:eigenvalueproblem}, such that $v \geq 0$ in a neighbourhood of $\partial B$. Then, for every $x_0 \in \partial B$,
\[\partial^s_n v(x_0)= \lim_{x \to x_0} \frac{v(x)}{d(x)^s} > 0.\]
\end{prop}
\begin{proof}
	The proof is inspired by \cite[Proposition 3.3]{falljarohs}. 
	By Proposition \ref{prop:valueoftheintegral}, for every $x_0 \in \partial B$,
	\[ \int_B \frac{v(y)}{|x_0-y|^{N+2s}}\,dy > 0.\]
	Therefore, there exists  $r_0 \in (0,1)$ such that, for every $r \in (r_0,1)$, 
	\[  \int_{B \setminus A_r} \frac{v(x)}{|x-y|^{N+2s}}\,dx > 0 \quad \text{for every } y \in A_r,\]
	where $A_r := \{x \in \R^N \,|\, r < |x|<1\}$. Moreover, by assumption, we can suppose without loss of generality that $v \geq 0$ in $A_{r_0}$. Set $c(x) = pu^{p-1} + \mu$. By the Faber-Krahn inequality, and the scaling properties of the eigenvalues of the fractional Laplacian, it holds
	\[ \lim_{r \to 1^-} \lambda_1(A_r) \to +\infty.\]
	Therefore, it is possible to choose $r \in (r_0,1)$ such that $\lambda_1(A_r)>\|c\|_\infty$. 
	Let $\psi \in H^s_0(A_r)$ be the solution of
	\begin{equation*}
		\left\{\begin{array}{r c l l}
			(-\Delta)^s \psi &=& 1& \text{in }A_r, \\ \psi &= & 0 & \text{in }\R^N \setminus A_r.\end{array}\right.
	\end{equation*}
By \cite[Lemma 1.2]{grecoservadei}, $\psi$ satisfies Hopf's Lemma at every point of $\partial B$.	Let $R \in (0,1)$, be such that the ball $B_{2R}$ of radius $2R$ centered at the origin is such that $B_{2R} \cap A_r = \emptyset$. For $\alpha > 0$, define
	\[ \eta := \psi + \alpha \xi,\]
	where $\xi \in C^\infty_c(B_{2R})$ is a nonnegative function, such that $\xi \geq 1$ in $B_R$. Let $\varphi \in H^s_0(A_r)$ be a nonnegative function. Then we have
	\begin{align*}
		\int_{\R^N} \int_{\R^N} \frac{(\eta(x)-\eta(y))(\varphi(x)-\varphi(y))}{|x-y|^{N+2s}}\,dx\,dy & = 	\int_{\R^N} \int_{\R^N} \frac{(\psi(x)-\psi(y))(\varphi(x)-\varphi(y))}{|x-y|^{N+2s}}\,dx\,dy \\ & + \alpha 	\int_{\R^N} \int_{\R^N} \frac{(\xi(x)-\xi(y))(\varphi(x)-\varphi(y))}{|x-y|^{N+2s}}\,dx\,dy \\ & = \int_{A_r} \varphi(x)\,dx - 2\alpha \int_{A_r} \int_{B_{2R}} \frac{\varphi(x)\xi(y)}{|x-y|^{N+2s}}\,dx\,dy \\ & \leq \int_{A_r} \varphi(x)\,dx - 2\alpha \int_{A_r} \int_{B_{R}} \frac{\varphi(x)}{|x-y|^{N+2s}}\,dx\,dy \\ & \leq (1-\alpha C) \int_{A_r} \varphi(x)\,dx.
	\end{align*}
	Here $C = C(R)$ (notice that $C$ depends on the \emph{maximal} distance between points of $A_r$ and $B_R$). By taking $\alpha$ sufficiently big so that
	\[ 1-\alpha C \leq  -\|c\|_\infty,\]
	we obtain that \[ (-\Delta)^s \eta \leq -\|c\|_\infty \eta \leq c(x) \eta \qquad \text{in } A_r\] since $\eta\geq 0$ in $A_r$.
	 By continuity, there exists $\varepsilon > 0$ sufficiently small such that the condition
	\[  \quad \int_{B \setminus A_r} \frac{v(x) - \varepsilon \eta(x)}{|x_0-y|^{N+2s}}\,dx >0 \quad \text{for every } y \in A_r\]
	is satisfied. Recalling that $v$ is non-negative in $A_r$, we can assert that $(v-\varepsilon \eta)^-|_{A_r} \in H^s_0(A_r)$. By Proposition \ref{prop:weakminimumprinciple} with $w=v-\varepsilon \eta \in H^s(\R^N)$ it holds
	\[ v \geq \varepsilon \psi \qquad \text{in }A_r,\]
	which implies that $v$ satisfies Hopf's Lemma on every point of $\partial B$.
\end{proof}

\section{Uniqueness of least-energy solutions}\label{Sec:Uniq}

The results of the previous section on the structure of second eigenfunctions of the linearized problem, and the validity of Hopf's Lemma, allow to prove nondegeneracy of least-energy solutions to \eqref{eq:mainball} and, as a consequence, their uniqueness.

\begin{prop} \label{prop:nondegeneracy}
Let $B \subset \R^N$, and let $u \in H^s_0(B)$ be a least-energy solution of \eqref{eq:mainball}. Then, $u$ is non-degenerate.
\end{prop}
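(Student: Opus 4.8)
The plan is to prove that $0$ is not an eigenvalue of \eqref{eq:eigenvalueproblem}. By Proposition \ref{prop:lambda1} we have $\mu_1 < 0$, and by Proposition \ref{prop:lambda2} we have $\mu_2 \geq 0$; since the eigenvalues are nondecreasing, $0$ can be an eigenvalue only if $\mu_2 = 0$. Hence it suffices to upgrade the inequality of Proposition \ref{prop:lambda2} to the strict inequality $\mu_2 > 0$, which I would establish by contradiction, assuming $\mu_2 = 0$.

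Under this assumption I would bring into play two competing ``zero modes''. The first is the antisymmetric second eigenfunction $v$ produced by Proposition \ref{prop:existenceofantisymmetric}: since $\mu_2 = 0$, it solves $(-\Delta)^s v = (pu^{p-1}+\lambda)v$ in $B$, it is antisymmetric with respect to $H = \{x_1 = 0\}$, it is sign-definite in the half-ball $B^+ := B \cap \mathring{\Sigma}^+$, and it enjoys a strict, definite sign of $\partial_n^s v$ on $\partial B \cap \mathring{\Sigma}^+$ coming from the Hopf lemma of \cite{falljarohs}. The second mode is the directional derivative $z := \partial_{x_1} u$. Because $u$ is radial and radially decreasing, $z$ is antisymmetric with respect to $H$ and has constant nonzero sign in $B^+$; moreover, differentiating the equation \eqref{eq:mainball} shows that $z$ solves the \emph{same} linearized equation $(-\Delta)^s z = (pu^{p-1}+\lambda)z$ in $B$. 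The essential difference between the two modes is that $z \notin H^s_0(B)$: near $\partial B$ one has $u \sim d^s$, so that $z$ blows up like $d^{s-1}$, has infinite Gagliardo seminorm, and (for $s \leq \tfrac12$) fails even to be square integrable, so it is not an admissible element of the energy space.

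To extract a contradiction I would pair $v$ and $z$ through a nonlocal Green identity on $B^+$, working in the antisymmetric framework of \cite{falljarohs}, in which the fractional Laplacian of an antisymmetric function is represented on $\mathring{\Sigma}^+$ by an integral operator $A_s$ carrying a reflected kernel. Since both $v$ and $z$ solve the same linear equation with $\mu = 0$ in $B^+$ for $A_s$, the interior contributions cancel; the reflection across $H$ is encoded in $A_s$ itself, so the only boundary contribution comes from the spherical cap $\partial B \cap \mathring{\Sigma}^+$. There the identity pairs the vanishing trace of $v$ against the boundary singularity of $z$ and conversely, and the surviving term is therefore controlled by the product of $\partial_n^s v$ with the leading boundary coefficient of $z$. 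The sign of $z$ in $B^+$ together with the strict sign of $\partial_n^s v$ forces this boundary integral to be strictly of one sign, contradicting the vanishing of the full expression. This contradiction yields $\mu_2 > 0$, so $0$ is not an eigenvalue and $u$ is non-degenerate.

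I expect the main obstacle to be precisely this nonlocal integration by parts. Unlike in Lin's local argument \cite{lin}, where the analogue is the classical Green formula on the half-ball and the boundary term is read off directly from $\partial_\nu v$ and the trace of $u_{x_1}$, here one of the paired functions lies outside $H^s_0(B)$ and is singular (of order $d^{s-1}$) at the boundary. Making the identity rigorous and pinning down the exact form and sign of the surviving boundary term requires the up-to-the-boundary regularity theory for the fractional Laplacian and the well-definedness of $\partial_n^s$ from \cite{rosoton}, together with the antisymmetric maximum principle and Hopf lemma of \cite{falljarohs}; one will likely have to regularize, applying the Green identity on half-balls slightly smaller than $B^+$ and passing to the limit, and to verify carefully that every integral in the pairing converges absolutely despite the boundary blow-up of $z$. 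One must also justify rigorously that $z = \partial_{x_1}u$ indeed solves the linearized equation in $B$ by differentiating the nonlocal equation.
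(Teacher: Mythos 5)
Your overall strategy is the right one --- assume $\mu_2=0$, take the antisymmetric second eigenfunction $v$ of Proposition \ref{prop:existenceofantisymmetric}, pair it against the $x_1$-derivative of $u$, and derive a contradiction from the sign-definite boundary term via the Hopf lemmas --- but the step you yourself flag as ``the main obstacle'' is in fact the entire content of the proof, and as written it is a genuine gap. The nonlocal Green identity on the half-ball that you propose, pairing $v\in H^s_0(B)$ against $z=\partial_{x_1}u\sim d^{s-1}$, is not available off the shelf: $z$ is not in the energy space, the pairing $\int v\,(-\Delta)^s z$ does not obviously converge, the ``surviving boundary term'' in a nonlocal integration by parts is not a classical trace integral, and the half-ball $B^+$ has a corner where $\partial B$ meets $H$ which makes a regularize-and-pass-to-the-limit argument delicate even in the local case. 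Identifying that boundary term as $c\int_{\partial B\cap\mathring{\Sigma}^+}\partial_n^s u\,\partial_n^s v\,(e_1\cdot\nu)$ with an explicit nonzero constant requires exactly the fine boundary regularity and integration-by-parts machinery of Ros-Oton--Serra; you would essentially be re-deriving their Pohozaev identity rather than invoking a known lemma.

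The paper sidesteps all of this by a clean trick: it applies the already-proven bilinear Pohozaev identity (\cite[Proposition 1.6]{rosotonserra}, applied to $u+v$ and $u-v$ and subtracting) with \emph{shifted centers} $e_1$ and $-e_1$. Subtracting the two resulting identities replaces the dilation field $x\cdot\nabla$ by the translation field $e_1\cdot\nabla=\partial_{x_1}$, which is exactly the pairing you want, but now every term is justified by the quoted regularity results and the boundary term comes out explicitly as $\Gamma(1+s)^2\int_{\partial B}\partial_n^s u\,\partial_n^s v\,(e_1\cdot\nu)$. Together with the orthogonality $\int_B u^p v=\int_B uv=0$ (from testing the two equations against each other and from antisymmetry), the interior terms vanish and the sign contradiction follows from property (iii) of Proposition \ref{prop:existenceofantisymmetric} and the Hopf lemma for $u$, exactly as you anticipated. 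So your proposal converges to the same sign-definite quantity as the paper, but to make it a proof you would need to either carry out the singular-integral analysis you describe (a substantial task) or, as the paper does, reduce it to the known bilinear Pohozaev identity with centers $\pm e_1$.
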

\begin{proof}
Let $\mu_1$ and $\mu_2$ be the first two eigenvalues of the linearized problem \eqref{eq:mainlinearized}. We already proved that $\mu_1 <0$ (Proposition \ref{prop:lambda1}), and $\mu_2 \geq 0$ (Proposition \ref{prop:lambda2}). In order to show that $u$ is non-degenerate, it is enough to show that $\mu_2 > 0$.

Suppose by contradiction that $\mu_2=0$. We distinguish two cases.

{\bf Case 1: The linearized problem admits a nonradial second eigenfunction.}

By Proposition \ref{prop:existenceofantisymmetric}, there exists a second eigenfunction $v \in H^s_0(B)$ of the linearized problem, which solves
\[ \left\{ \begin{array}{r c l l} (-\Delta)^s v & = & p u^{p-1} v & \text{in }B, \\ v & = & 0 & \text{in }\R^N \setminus B,\end{array}\right.\]
and which satisfies conditions (i), (ii), (iii) in that proposition. Arguing as in \cite[Theorem 3.3]{brascolindgrenparini}, one can prove that $v \in L^\infty(B)$, and therefore $v \in C^s(\overline{B})$ by \cite[Proposition 7.2]{rosoton}.
Testing equation \eqref{eq:mainball} with $v$, and equation \eqref{eq:mainlinearized} with $u$, we obtain
\[ \int_B u^p v = p \int_B u^p v\]
which implies, since $p>1$,
\[ \int_B u^p v = 0.\]
Let $e_1:=(1,0,\dots,0)$. By applying \cite[Proposition 1.6]{rosotonserra} with center $e_1$ to the functions $u+v$ and $u-v$, and then taking the difference, we obtain a bilinear Pohozaev identity:
\begin{align} \label{bilinearpohozaev} & \int_B ((x-e_1) \cdot \nabla u) (-\Delta)^s v + \int_B ((x-e_1) \cdot \nabla v) (-\Delta)^s u  \\ \nonumber & = \left( s- \frac{N}{2}\right)\left[ \int_B u(-\Delta)^s v + \int_B v(-\Delta)^s u\right] - \Gamma(1+s)^2 \int_{\partial B} \partial^s_n u \,\partial^s_n v \,((x-e_1) \cdot \nu),\end{align}
where $\nu$ is the outward normal vector on $\partial B$. We point out that we are allowed to apply \cite[Proposition 1.6]{rosotonserra} since $u$ and $v$ are bounded solutions of equations of the form given in \cite[Equation (1.8)]{rosotonserra}, and therefore they satisfy the required assumptions thanks to \cite{rosotonserra2} (see also \cite[Theorem 1.4]{rosotonserra}).

The left-hand side of \eqref{bilinearpohozaev} is equal to
\[ p\int_B ((x-e_1) \cdot \nabla u) u^{p-1}v  + \int_B ((x-e_1) \cdot \nabla v) u^p  \]
which is equal to
\[ \int_B (x-e_1) \cdot \nabla (u^{p}) v + \int_B ((x-e_1) \cdot \nabla v) u^p \]
and, after integration by parts, to
\[ - N \int_B u^p v  = 0. \]
On the other hand, the right-hand side of \eqref{bilinearpohozaev} is equal to
\[\left( s- \frac{N}{2}\right)\left[ p\int_B u^p v+ \int_B u^p v\right] - \Gamma(1+s)^2 \int_{\partial B} \partial^s_n u \,\partial^s_n v \,((x-e_1) \cdot \nu)\]
and therefore it is equal to
\[  - \Gamma(1+s)^2 \int_{\partial B} \partial^s_n u \,\partial^s_n v \,((x-e_1) \cdot \nu).\]
All in all, we obtain that
\[\int_{\partial B} \partial^s_n u \,\partial^s_n v \,((x-e_1) \cdot \nu)= 0.\]
Since $u$ is nonnegative and radially symmetric, by the fractional Hopf's Lemma \cite[Lemma 3.1]{grecoservadei} we obtain $\partial^s_n u = \text{const.} < 0$ on $\partial B$, and hence
\[\int_{\partial B} \partial^s_n v \,((x-e_1) \cdot \nu)= 0.\]
Repeating the reasoning with $-e_1$ instead of $e_1$, we have
\[\int_{\partial B} \partial^s_n v \,((x+e_1) \cdot \nu)= 0\]
and, subtracting the two equations,
\[\int_{\partial B} \partial^s_n v \,(e_1 \cdot \nu)= 0.\]
Since $v$ satisfies property (iii) in Proposition \ref{prop:existenceofantisymmetric}, we reach a contradiction. Therefore, $\mu_2 > 0$, and $u$ is non-degenerate.

{\bf Case 2: The linearized problem admits only radially symmetric second eigenfunctions.}

Let $v \in H^s_0(B)$ be a radially symmetric second eigenfunction. Arguing as in the previous case, we obtain the bilinear Pohozaev identity
\begin{align} \label{bilinearpohozaevradial} & \int_B (x \cdot \nabla u) (-\Delta)^s v + \int_B (x \cdot \nabla v) (-\Delta)^s u  \\ \nonumber & = \left( s- \frac{N}{2}\right)\left[ \int_B u(-\Delta)^s v + \int_B v(-\Delta)^s u\right] - \Gamma(1+s)^2 \int_{\partial B} \partial^s_n u \,\partial^s_n v \,(x \cdot \nu).\end{align}
which eventually leads, by following the same steps as above, to the condition
\[\int_{\partial B} \partial^s_n v \,(x \cdot \nu)= 0,\]
which is the same as
\[\int_{\partial B} \partial^s_n v = 0,\]
a contradiction to the Hopf's Lemma proved in Proposition \ref{prop:hopflemmaradial}. Therefore, $\mu_2 > 0$, and $u$ is non-degenerate.

\end{proof}
We are now ready to prove the main result of this paper.

\begin{proof}[{\bf Proof of Theorem \ref{thm:maintheorem}}]

The proof is done by contradiction following the ideas of the local case \cite{lin}, and it is based on a continuation argument in combination with the Implicit Function Theorem. Indeed, the non degeneracy of solutions given by Proposition \ref{prop:nondegeneracy} implies the injectivity of the Fr\'{e}chet derivative of the operator  $F:H^s_0(B) \times \left(1,\frac{N+2s}{N-2s}\right) \to H^{-s}(B)$ defined as
\[F(u,p):=(-\Delta)^su-u^p .\]
 Thus, given $(u,p) \in \left(1,\frac{N+2s}{N-2s}\right)$ satisfying \eqref{eq:main}, the Implicit Function Theorem ensures the uniqueness of solution in a sufficiently small neighborhood of $u$ and $p$. Suppose there exists $\bar{p}\in\left(1, \frac{N+2s}{N-2s}\right)$ such that we have two distinct solutions $u_1(\bar p),u_2(\bar p)\in H^s_0(B)$; a continuation argument on $\bar{p}$ and the uniqueness of positive solutions (and hence, of least-energy solutions) for $p$ close to $1$ given by \cite{diebiannisaldana} will give a contradiction. 
\end{proof}

\section{Final remarks and open questions}

The results of Sections \ref{Sec:Hopf} and  \ref{Sec:Uniq} do not rely on the energy minimizing property of the solutions, and therefore they are actually valid for general solutions to \eqref{eq:main} having Morse index equal to one.

We are left with several open questions. It is natural to consider the following variation of the fractional Lane-Emden equation, where the nonlinearity is given by the sum of a linear and a superlinear term:
\begin{equation}\label{eq:finallinearized}
	\left\{\begin{array}{r c l l}
		(-\Delta)^s u & = & \lambda u + u^p & \text{in }\Omega, \\ u & = & 0 & \text{in }\R^N \setminus \Omega.
	\end{array}\right.
\end{equation}  
Assuming $\lambda < \lambda_1(\Omega)$, it is straightforward to prove existence of least-energy solutions to this problem. Moreover, when $\Omega$ is a ball, the results of Sections \ref{sec:linearized} and \ref{Sec:Hopf} still apply with minor modifications. Unfortunately, the proof of Proposition \ref{prop:nondegeneracy} does not carry over immediately to our case, since a condition of the form
\[ \int_B uv = 0\]
would be necessary. This condition is satisfied, by symmetry considerations, when dealing with an antisymmetric second eigenfunction, but its validity is unclear when considering radially symmetric second eigenfunctions.
	
Investigating whether \emph{positive} solutions of the fractional Lane-Emden problem in the ball are unique was the original question which motivated this paper. As we mentioned in the Introduction, in a very recent work \cite{fallwethonedim} Fall and Weth were able to prove uniqueness of positive solutions in the one-dimensional case. Extending their result to every dimension seems to be a difficult problem to be tackled.

Finally, one might wonder whether uniqueness of least-energy (or positive) solutions holds true for convex domains, as in the original result by Lin for the local case, or for other classes of domains. Difficulties arise when studying the linearized problem in order to prove nondegeneracy, since the geometry of second eigenfunctions is still not well understood in the nonlocal setting.

\subsection*{Conflict of interest:} The authors state that there is no Conflict of interest.

\subsection*{Data availability statement:} The manuscript does not have any associated data.

\end{document}